\documentclass[12pt]{article}      

\usepackage{pslatex}        

\usepackage{fancyhdr}
\usepackage{epsfig}
\usepackage{color}
\usepackage{natbib}

\usepackage{graphicx}
\usepackage{latexsym}
\usepackage{amssymb}
\usepackage{amsfonts}
\usepackage{amsmath}
\usepackage{amsthm}
\usepackage{mathtools}
\usepackage{mathabx}

\theoremstyle{plain}
\newtheorem{thm}{Theorem}

\DeclareMathOperator{\PP}{{\bf P}}
\DeclareMathOperator{\E}{{\bf E}}
\DeclareMathOperator{\Var}{{\bf Var}}
\DeclareMathOperator{\hE}{\widehat{\bf E}}
\DeclareMathOperator{\hS}{\widehat{\bf S}}

\def\boldhline{\noalign{\global\arrayrulewidth.8pt}\hline\noalign{\global\arrayrulewidth.4pt}}

\newcommand{\iid}{\mbox{i.i.d.\frenchspacing}}

\usepackage[top=1in, bottom=1in, left=0.78in, right=0.78in]{geometry}
\linespread{1.03}
\numberwithin{equation}{section}

\begin{document}

\LARGE
\begin{center}
\textbf {Gini's mean difference and variance as measures of finite populations scales }\\[2.25\baselineskip]

\small
\text{Andrius {\v C}iginas${}^{1}$ and Dalius Pumputis${}^{2}$}\\

\medskip

{\footnotesize
${}^{1}$Vilnius University Institute of Mathematics and Informatics, LT-08663 Vilnius, Lithuania \\
${}^{2}$Lithuanian University of Educational Sciences, LT-08106 Vilnius, Lithuania 
}\\[2.25\baselineskip]
\end{center}

\small
\begin{abstract}

We consider Gini's mean difference statistic as an alternative to the empirical variance in the settings of finite populations where simple random samples are drawn without replacement. In particular, we discuss specific (in the finite population context) estimation strategies for a scale of the population, related to the alternative statistic under possible presence of outliers in the data.

The paper presents also a wide comparative survey of properties of the Gini mean difference statistic and the empirical variance. It includes asymptotic properties of both statistics: the asymptotic normality, one-term Edgeworth expansions and bootstrap approximations for Studentized versions of the statistics. An estimation of the variances and other parameters of the statistics is also in the study, where we exploit an auxiliary information on the population elements in the case of its availability. Theoretical results are illustrated with a simulation study. 

\end{abstract}
\vskip 2mm

\normalsize

\noindent\textbf{Keywords:} sampling without replacement, sample variance, Gini's mean difference, robustness, asymptotic normality, second-order approximations 

\vskip 2mm

\noindent\textbf{MSC classes:} 62E20

\section{Introduction}\label{s:1}

\let\thefootnote\relax\footnotetext{The research of the first author is supported by European Union Structural Funds project "Postdoctoral Fellowship Implementation in Lithuania".} 

Together with a location parameter, a spread  (or scale) of a survey population are usually the parameters of interest. If a statistician assumes the classical model of independent and identically distributed (\iid) observations, then, at least, he has at his disposal the number of parametric distributions families, e.g., Gaussian, Cauchy, etc. Assume that, he chooses the particular family for a further analysis of the data. This family comes with its own measures of location and scale, for instance, the normal distribution parameters `suggest' to measure the mean and variance of the survey population, and the Cauchy distribution is specified by the population median and interquartile range. The traditional statistics theory has the answers how to get efficient estimates of locations and scales under commonly used populations models. However, parametric statistics models, being comparatively convenient, are known also as non-robust, i.e., deviations from their assumptions may lead to misleading conclusions. As it is often an instance, an appearance of some or more outlying observations can strongly affect the quality of typical estimators of the population location and scale. Then, if we believe that these outliers are, e.g., measurement errors, robust estimation methods can be a treatment of the problem. The pioneering (in formalization of the robust estimation) book of \cite{H_1981} starts with the normal distribution scale estimation example showing an inefficiency of the empirical variance compared to the mean absolute deviation under the presence of outliers in the sample data. In a sense similar to the latter statistic is Gini's mean difference (GMD) statistic. This estimator, its properties, connections and comparisons with the sample variance is the aim of the present paper. 

We consider the GMD statistic as an alternative to the empirical variance in the setting of a finite population $\{ 1,\ldots, N\}$ of elements with the corresponding set of real values ${\cal X}=\{ x_1,\ldots, x_N\}$ of the variable $x$ under investigation, and for the simple random sample 
$\{ 1,\ldots, n\}$ of size $n<N$ \emph{drawn without replacement} from the population with the measurements $\mathbb X=\{ X_1,\ldots, X_n\}$ of the variable $x$. In particular, the population parameters
\begin{equation}\label{scale_param_G}
G={N\choose 2}^{-1} \sum_{1\leq i<j\leq N} |x_i-x_j| 
\end{equation}
and
\begin{equation}\label{scale_param_V}
V={N\choose 2}^{-1}\sum_{1\leq i<j\leq N} (x_i-x_j)^2/2
\end{equation}
are two candidates to measure a scale of ${\cal X}$, only the latter seems more natural because of $\Var X_1=(N-1)V/N$. The corresponding unbiased estimators of these parameters are the GMD statistic
\begin{equation}\label{GMD}
U_G={n\choose 2}^{-1} \sum_{1\leq i<j\leq n} |X_i-X_j|
\end{equation}
and the empirical variance
\begin{equation}\label{EV}
U_V={n\choose 2}^{-1}\sum_{1\leq i<j\leq n} (X_i-X_j)^2/2.
\end{equation}
As an alternative to \eqref{EV}, the GMD statistic, known better since \cite{G_1912},  is widely used in economics. Now it is an ordinary measure of a dispersion of a distribution of income and also in cases of similar variables, see monograph of \cite{YS_2013}, where, by words of the authors, the commonly used variance-based analyses are `translated' into Gini-based. A use of the GMD is not restricted with measurements of an economic inequality. As in problems of economists, where data deviate from the normality, the parameter $G$ and its estimator $U_G$ can be used as dispersion's measures for many kinds of statistical data. Our choice of the finite populations setting has a motivation from the side of economics too, because, in economical surveys, the number $N$ of surveyed objects or subjects is not necessarily so large (compared to the sample size) that to ignore a dependence between the observations in the set $\mathbb X$. 

In Section \ref{s:strateg}, we consider three estimation of the finite population scale strategies related to the alternative $U_G$. We exploit two assumptions, which are usually possible in the finite population context: the so-called superpopulation assumption, and the availability of an auxiliary information about the population elements. We perform also simulation experiments, where we analyze advantages and disadvantages of the strategies, and compare them under populations without and with outliers.

The GMD statistic is one of several well-known universal estimators as, e.g., the median absolute deviation, interquartile range, which are less sensitive to outliers than the sample variance. Looking from the side of the robust estimation theory, if we can link data to a parametric population model, then, in the particular situations, there are more effective robust estimators of scale than those common ones, see \cite{H_1981}. But we focus here on an unified improvement of the empirical variance.

The next premium, which should be paid, is a relatively complex access to properties of the GMD statistic. On the other hand, in these problems, $U_G$ is more attractive than the other mentioned examples of universal estimators because of its smoothness (in a certain sense) or that it uses the complete sample information. In Section \ref{s:2}, an asymptotic analysis of distributions of the statistics $U_G$ and $U_V$ shows that their properties are similarly simple. To explain it, we apply an available theory of $U$- and $L$-statistics in the case of samples without replacement. In particular, statistics \eqref{GMD} and \eqref{EV} are likely the most popular $U$-statistics of degree two, and \eqref{GMD} is also the $L$-statistic, see \cite{S_1980}. As the $L$-statistic, $U_G$ is smooth in the sense that its weight function is smooth, see ibidem.

To be consistent with already known results, first, we mark that expressions of the variance of $U_G$ and its approximations are known since \cite{N_1936} and \cite{L_1952} in the case of \iid{} observations, and since \cite{G_1962} for the simple random samples without replacement. Second, a strong method to study the variances and the asymptotic normality of the statistics $U_G$ and $ U_V$ is Hoeffding's decomposition for $U$-statistics in \cite{H_1948}. Much latter, in \cite{ZC_1990}, the analogous decomposition was used in the case of finite population. Third, similarly, second-order approximations theory for samples without replacement has been realized after the case of \iid{} observations: \cite{KW_1990}, and \cite{BG_1999} follow \cite{BGZ_1986} on one-term Edgeworth approximations to the distributions of standardized $U$-statistics; papers of \cite{B_2003}, and \cite{B_2007}, on an one-term Edgeworth expansion for Studentized $U$-statistics and bootstrap approximations, appeared after \cite{H_1991}.

Since the true values of variances of the statistics are almost always unknown, we prefer to consider the asymptotic normality, one-term Edgeworth expansions and bootstrap approximations for Studentized versions of the statistics $U_G$ and $U_V$. A basis for such a study is the general theory in \cite{BG_2001}, \cite{B_2003}, and \cite{B_2007} (with without-replacement bootstrap of \cite{BBH_1994}), where, to ensure a validity of the approximations, quite general smoothness conditions are imposed on parts of the Hoeffding decomposition of $U$-statistics. Theorems of Section \ref{s:2} let to compare distributional properties of $U_G$ and $U_V$ much easy. 

A successful application of the one-term Edgeworth expansion requires to have good estimators (in the sense of an asymptotic consistency or a small mean square error) of the expansion's parameters. In the case of symmetric statistics (symmetric functions of observations) including $U$-statistics, jackknife techniques are used to estimate these parameters, see \cite{PZ_1998}, and \cite{B_2001}. In the separate cases of statistics, for example, for $U_G$ and $U_V$, there are more ways to construct estimators of the Edgeworth expansions parameters, e.g., for $L$-statistics including $U_G$, the bootstrap was used in \cite{C_2013a} and, assuming that the auxiliary information is available, calibration methods were applied in \cite{PC_2013}. In Section \ref{s:4}, we propose simple and also efficient estimators of the parameters, without the auxiliary information and also using it. Similar estimators of the variances of $U_G$ and $U_V$ are also considered. In Section \ref{s:5}, we discuss empirical Edgeworth expansions, based on the estimators of the parameters, and bootstrap approximations. In Section \ref{s:6}, we compare the obtained estimation results for both statistics of interest in the simulation study. Here we are interested also in a role of outliers in populations. Conclusions of the paper are given in Section \ref{s:7}.

\section{Estimation of scale}\label{s:strateg}

\subsection{Outliers and estimation strategies}

In the \iid{} setup, for many common parametric models of populations, the sample variance is an efficient estimator under ideal or close to ideal conditions. But assume that some of the sample data differ substantially from the other. Then the GMD statistic can be a better choice because it puts smaller weights on extreme observations thus lowering their impact on the estimation.

 In the finite population case, outliers are less influential too, when $U_G$ is applied. To see it, let us write parameters \eqref{scale_param_G} and \eqref{scale_param_V} in the different form. Assume (here and further in the paper), without loss of generality, that $x_1\leq\cdots\leq x_N$, and denote $\Delta_i=x_{i+1}-x_i$, $i=1,\ldots, N-1$. Then, taking $x_j-x_i=\sum_{k=i}^{j-1}\Delta_k$, one can obtain
\begin{equation*}
G^2=\frac{4}{N^2(N-1)^2}\Bigg[ \sum_{i=1}^{N-1}i^2(N-i)^2\Delta_i^2+2\sum_{1\leq i<j\leq N-1}ij(N-i)(N-j)\Delta_i\Delta_j\Bigg]
\end{equation*}
and
\begin{equation*}
V=\frac{1}{N(N-1)}\Bigg[ \sum_{i=1}^{N-1}i(N-i)\Delta_i^2+2\sum_{1\leq i<j\leq N-1}i(N-j)\Delta_i\Delta_j\Bigg].
\end{equation*}
These expressions are connected via the formal transformation
\begin{equation}\label{transf_X}
\Delta'_i\Delta'_j=\frac{4j(N-i)}{N(N-1)}\Delta_i\Delta_j, \qquad 1\leq i\leq j\leq N-1
\end{equation}
of ${\cal X}$, where $\Delta'_i=x'_{i+1}-x'_i$, $i=1,\ldots, N-1$, which explains the assertion. We note that system of equations \eqref{transf_X} has not a solution ${\cal X}'=\{x'_1, \ldots, x'_N\}$ except in cases of very simple ${\cal X}$.
\smallskip

\noindent\textbf{Outliers model.} For the simple random samples without replacement, we assume an existence of so-called representative outliers. This notion was introduced in \cite{C_1986}. It means the assumptions that: outlying observations are not errors of a measurement; the unsampled population part should contain outliers too. If these assumptions do not hold, then, in sample surveys, the problem of outliers is treated usually as a different from the estimation.  

More formally, denote by $0\leq p\leq N$ the number of outliers in the population. Assume that the population elements $\{ i_1, \ldots, i_p\}\subseteq \{ 1, \ldots, N\}$ belong to a different population, but this phenomenon is not known while the sample $\mathbb X$ was not obtained. Then the corresponding values from $x_1,\ldots, x_N$ are treated as outliers. In the random sample $\mathbb X$, the number of outliers is random and equals to the number of elements in the set  $\{ i_1, \ldots, i_p\}\cap \{ 1, \ldots, n\}$.

The proportion $p/N$ of outliers can be restricted without a significant loss of generality. In particular, as it is pointed in \cite{H_1981}, a part of gross errors (outliers) in samples usually is not larger than $10\%$. An interesting note on this issue is given in \cite{CF_1980}: "\emph{\ldots it is reasonable to consider three potential outliers in a data set of $10$ observations, but it is unrealistic to expect $30$ outliers out of a data set of $100$ observations. In the latter case, the outlier detection problem becomes one of discrimination between two or more classes of data.}". Similarly, for finite populations, if a large portion of outliers is expected in the population, they are neutralized typically (with a help of an auxiliary information) by applying stratified sampling designs, i.e., collecting potential outliers into a separate stratum. Another but similar solution, in this case, is a postratification.
\smallskip

\noindent\textbf{Estimation strategies.} Specific for the finite population ways to apply the GMD statistic as the alternative to the sample variance are the following.

\noindent ($S_1$) Assume that the fixed numbers $x_1,\ldots, x_N$ are the realizations of \iid{} random variables $X_1^*, \ldots, X_N^*$ (superpopulation model) from a parametric family of distributions with the scale parameter which is an one-argument function of $\sqrt{\Var X_1^*}$. Then the scale of ${\cal X}$ is treated as the same function of $\sqrt{V}$, and the estimator of the argument $\sqrt{V}$ is taken to be of the form $aU_G$, where $a>0$ is a constant compensating a bias.

\noindent ($S_2$) Under the presence of well-correlated and completely known auxiliary variable $z$ with the values ${\cal Z}=\{ z_1,\ldots, z_N\}$ in the population, the scale measure is $\sqrt{V}$ and its estimator is $aU_G$ with the correction $a>0$ evaluated from ${\cal Z}$.  

\noindent ($S_3$) The parameter $G$ is itself treated as the scale of ${\cal X}$, and the GMD statistic $U_G$ is its estimator.

Case ($S_1$) is close to the parametric statistics. In the \iid{} settings, the multipliers $a$, which ensure that $aU_G$ is the unbiased estimator of $\sqrt{V}$, are known for commonly used parametric families: $a=\sqrt{\pi}/2$ for the normal distributions; $a=1$ for the exponential distributions; etc. Therefore, assuming an existence of the superpopulation, we use the same constants for the estimation in the finite population. If a good auxiliary information ${\cal Z}$ is available, then these theoretical $a$ should not so much differ from the corresponding values obtained by case ($S_2$), where $a>0$ is evaluated from $aG=\sqrt{V}$ using ${\cal Z}$ instead of ${\cal X}$. If the scatter of ${\cal X}$ can not be linked to a distributions family, e.g., it is a mixture of two unknown distributions, and there is no other additional information, then we suggest strategy ($S_3$).

\subsection{Numerical analysis}\label{s:simul_out}

We compare efficiencies of strategies ($S_1$) and ($S_2$) in respect of the common estimation by $\sqrt{U_V}$ under presence of outliers. We consider two populations which values of the variable $x$ are generated respectively from two different parametric families: the normal distributions ${\cal N}(\mu, \sigma^2)$, and the gamma distributions ${\cal G}(k, \theta)$ with the shape $k$ and scale $\theta$, where variance is equal to $k\theta^2$. In the case of gamma distribution, the correction $a=k^{-1/2}(2-4I_{0.5}(k+1, k))^{-1}$ depends on $k$, where $I_t(u, v)$ is the regularized incomplete Beta function. For each of these populations, we consecutively increase the part of outliers in the population as follows. Firstly, we select some particular population elements randomly without replacement. Secondly, we replace their values by new generated from the same family of distributions but with different parameters, and we fix these values. In the next steps, the set of outlying elements is increased by selecting from those which still not belong to the outliers.

In particular, the distributions are: ${\cal N}(0, 1)$, and ${\cal N}(0, 9)$ is for generation of outliers; ${\cal G}(3, 1/\sqrt{3})$  (then $a=8\sqrt{3}/15$), and ${\cal G}(3, \sqrt{3})$ is for outliers. We take $N=1000$, $n=200$, and consecutively construct the populations with $p=0, 20, 40, 60, 80, 100$ outliers.

The fixed values of the auxiliary information ${\cal Z}$ are generated by the linear regression $z_i=3+2x_i+\varepsilon_i$, where $\varepsilon_i$, $i=1,\ldots, N$, are \iid{} random variables from ${\cal N}(0, \vartheta^2)$. Since the set ${\cal X}$ is different for different $p$, collections ${\cal Z}$ are different too.

To understand better a role of the auxiliary information in strategy ($S_2$), we simulate different correlations $\rho_{zx}$ between ${\cal Z}$ and ${\cal X}$. The correlation is controlled with the variance $\vartheta^2$ in the linear model. Thus we choose the variance in order to have $\rho_{zx}=0.9, 0.7, 0.5$ approximately. Tables \ref{t1}--\ref{t2} present the comparison of the estimation methods by means of mean square errors and biases.

\begin{table}[!h]
\caption{ ${\cal N}(0, 1)$ with outliers ${\cal N}(0, 9)$. Accuracy by $10\times (BIAS(\cdot), \sqrt{MSE(\cdot)})$.}\label{t1}
 \centering
 \tabcolsep=14pt
 \vspace{1mm}
\scalebox{0.89}{
\begin{tabular}{crrrrr}
\boldhline
    $p/N$       &     $\sqrt{U_V}$  &    \phantom{$\rho_{zx}=0.9$; }($S_1$) &     $\rho_{zx}=0.9$; ($S_2$)   &   $\rho_{zx}=0.7$; ($S_2$) &     $\rho_{zx}=0.5$; ($S_2$)  \\          
\hline
$0.00$  &  $		(-0.01,	0.48)$  &  $		(-0.06,	0.47)$  &  $		(-0.02,	0.47)$  &  $		 (\phantom{-}0.00,	0.47)$  &  $		(-0.05,	0.47)$ \\
$0.02$  &  $		(-0.03,	0.86)$  &  $		(-0.41,	0.73)$  &  $		(-0.13,	0.63)$  &  $		(-0.32,	0.68)$  &  $		(-0.40,	0.72)$ \\
$0.04$  &  $		(-0.03,	0.99)$  &  $		(-0.67,	0.97)$  &  $		(-0.16,	0.75)$  &  $		(-0.44,	0.84)$  &  $		(-0.63,	0.95)$ \\
$0.06$  &  $		(-0.03,	1.10)$  &  $		(-0.92,	1.22)$  &  $		(-0.24,	0.87)$  &  $		(-0.71,	1.07)$  &  $		(-0.86,	1.17)$ \\
$0.08$  &  $		(-0.03,	1.10)$  &  $		(-0.95,	1.26)$  &  $		(-0.28,	0.91)$  &  $		(-0.58,	1.02)$  &  $		(-0.92,	1.23)$ \\
$0.10$  &  $		(-0.03,	1.22)$  &  $		(-1.18,	1.48)$  &  $		(-0.21,	0.98)$  &  $		(-0.91,	1.29)$  &  $		(-1.06,	1.39)$ \\
\boldhline
\end{tabular} 
}

\vspace{0.5mm}

\caption{ ${\cal G}(3, 1/\sqrt{3})$ with outliers ${\cal G}(3, \sqrt{3})$. Accuracy by $10\times (BIAS(\cdot), \sqrt{MSE(\cdot)})$.}\label{t2}
 \centering
 \tabcolsep=14pt
 \vspace{1mm}
\scalebox{0.89}{
\begin{tabular}{crrrrr}
\boldhline
    $p/N$       &     $\sqrt{U_V}$  &    \phantom{$\rho_{zx}=0.9$; }($S_1$) &     $\rho_{zx}=0.9$; ($S_2$)   &   $\rho_{zx}=0.7$; ($S_2$) &     $\rho_{zx}=0.5$; ($S_2$)  \\          
\hline
$0.00$  &  $		(-0.03,	0.71)$  &  $		(-0.18,	0.64)$  &  $		(-0.21,	0.65)$  &  $		(-0.42,	0.73)$  &  $		(-0.60,	0.84)$ \\
$0.02$  &  $		(-0.09,	1.40)$  &  $		(-1.09,	1.40)$  &  $		(-0.48,	1.04)$  &  $		(-1.05,	1.36)$  &  $		(-1.48,	1.70)$ \\
$0.04$  &  $		(-0.08,	1.42)$  &  $		(-1.24,	1.55)$  &  $		(-0.66,	1.18)$  &  $		(-1.21,	1.53)$  &  $		(-1.69,	1.91)$ \\
$0.06$  &  $		(-0.07,	1.47)$  &  $		(-1.56,	1.87)$  &  $		(-0.76,	1.35)$  &  $		(-1.71,	1.99)$  &  $		(-1.98,	2.23)$ \\
$0.08$  &  $		(-0.12,	1.93)$  &  $		(-2.24,	2.56)$  &  $		(-1.08,	1.71)$  &  $		(-2.06,	2.41)$  &  $		(-2.70,	2.94)$ \\
$0.10$  &  $		(-0.13,	1.99)$  &  $		(-2.56,	2.89)$  &  $		(-1.31,	1.97)$  &  $		(-2.54,	2.88)$  &  $		(-2.85,	3.14)$ \\
\boldhline
\end{tabular} 
}
\end{table}

It is seen from Table \ref{t1} that strategy ($S_1$) improves the estimator $\sqrt{U_V}$ where the proportion $p/N$ is smaller. For $p/N$ larger than $0.04$, ($S_1$) becomes inefficient (by $MSE(\cdot)$) because its bias is large, since the fixed correction $a$ is to much approximate for the mix of the normal distributions. Strategy ($S_2$) is the best under strong correlation between $x$ and $z$, because the estimation bias is well-corrected. The efficiency of ($S_2$) decreases with the decrease of the correlation $\rho_{zx}$.

Table \ref{t2} shows similar results for the asymmetric gamma distributions. Here outliers affect the estimators stronger because the distribution of outliers has larger mean (location) in addition. Therefore, strategies ($S_1$) and ($S_2$) are efficient for smaller proportions $p/N$ than in Table \ref{t1}. 

We conclude that strategies ($S_1$) and ($S_2$), and thus the GMD statistic, are efficient, in respect of $\sqrt{U_V}$, if there is a small percent of outliers in the population. Moreover, there is no loss in the efficiency of the strategies if there are no outliers in the population.

\section{Theoretical properties of the statistics}\label{s:2}

\subsection{Hoeffding's decompositions and variances}

The statistic $U=U_n(\mathbb X)=\sum_{1\leq i<j\leq n} h(X_i,X_j)$, where a function $h\colon {\cal X} \times {\cal X} \to \mathbb R$ satisfies $h(x,y)=h(y,x)$, is called $U$-statistic of degree two. For the cases of the GMD statistic $U_G$ and the sample variance $U_V$, we have 
\begin{equation*}
h(X_1,X_2)={n\choose 2}^{-1}|X_1-X_2|
\end{equation*}
and
\begin{equation*}
h(X_1,X_2)={n\choose 2}^{-1}(X_1-X_2)^2/2,
\end{equation*}
respectively. Following \cite{B_2003}, the Hoeffding decomposition of the $U$-statistic is
\begin{equation}\label{U_dec} 
U=\E U+U_1+U_2,
\end{equation}
where $U_1=\sum_{i=1}^n g_1(X_i)$ and $U_2=\sum_{1\leq i<j\leq n} g_2(X_i, X_j)$ are centered and uncorrelated linear and quadratic parts, respectively.  Here, for $1\leq k\leq N$,
\begin{equation*}
g_1(x_k)=(n-1)\frac{N-1}{N-2}\E \left(h(X_1,X_2)-\E h(X_1,X_2) \,\middle|\, X_1=x_k \right)
\end{equation*}
and, for $1\leq k \neq l\leq N$,
\begin{equation*}
g_2(x_k,x_l)=h(x_k,x_l)-\E h(X_1,X_2)-(n-1)^{-1}\left( g_1(x_k)+g_1(x_l) \right).
\end{equation*}
The so-called first- and second-order influence functions $g_1(\cdot)$ and $g_2(\cdot,\cdot)$ have usually a different impact to the variance of $U$-statistic. As in cases of any other linearization techniques, it is expected that the linear part in \eqref{U_dec} dominates against the remainder in the sense of variance size. In particular, we consider structures of the variances of the statistics $U_G$ and $U_V$ by formula (2.6) in \cite{BG_2001}: 
\begin{equation}\label{U_var_dec}
\Var U=\frac{n(N-n)}{N-1}\sigma_1^2+{n\choose 2}{N-n\choose 2}{N-2\choose 2}^{-1}\sigma_2^2,
\end{equation}
where it is denoted $\sigma_1^2=\E g_1^2(X_1)$ and $\sigma_2^2=\E g_2^2(X_1,X_2)$. Let us elaborate the statistics of interest. 
\smallskip

\noindent\textbf{GMD statistic.} To find the influence functions, we rewrite \eqref{GMD} into the alternative form 
\begin{equation*}
U_G={n\choose 2}^{-1}\sum_{j=1}^n (2j-n-1) X_{j:n},
\end{equation*}
where $X_{1:n}\leq\cdots\leq X_{n:n}$ are the order statistics of the observations $\mathbb X$, and apply the Hoeffding decomposition results for $L$-statistics from \cite{C_2012}.  Denote $a_i=(2i-N)/N$, $1\leq i\leq N-1$. Then, for $1\leq k\leq N$,
\begin{equation*}\label{GMD_g1}
g_1(x_k)=-\frac{2}{n}\frac{N}{N-2}\sum_{i=1}^{N-1}\left( \mathbb{I}\{ i\geq k\}-\frac{i}{N}\right)a_i\Delta_i,
\end{equation*} 
where $\mathbb I\{\cdot\}$ is the indicator function, and, for $1\leq k<l\leq N$,
\begin{equation*}\label{GMD_g2}
g_2(x_k,x_l)=-\frac{4}{n(n-1)}\sum_{i=1}^{N-1} \phi_{k,l}(i)\Delta_i,
\end{equation*}
where
\begin{equation*}
\phi_{k,l}(i)=\begin{cases}
i(i-1)/A, &\text{if $1\leq i<k$,}\\
-(i-1)(N-i-1)/A, &\text{if $k\leq i<l$,}\\
(N-i-1)(N-i)/A, &\text{if $l\leq i<N$,}
\end{cases}
\end{equation*}
with $A=(N-1)(N-2)$. Next, direct calculations give the expressions of variance decomposition \eqref{U_var_dec} components:
\begin{equation}\label{GMD_sigma1}
\sigma_1^2=\frac{4}{n^2}\frac{1}{(N-2)^2}\Bigg[ \sum_{i=1}^{N-1}i(N-i)a_i^2\Delta_i^2+2\sum_{1\leq i<j\leq N-1}i(N-j)a_ia_j\Delta_i\Delta_j\Bigg]
\end{equation}
and
\begin{equation}\label{GMD_sigma2}
\begin{split}
\sigma_2^2=\frac{16}{n^2(n-1)^2}\frac{1}{N(N-1)^2(N-2)}\Bigg[& \sum_{i=1}^{N-1}i(i-1)(N-i-1)(N-i)\Delta_i^2 \\
&\quad +2\sum_{1\leq i<j\leq N-1}i(i-1)(N-j-1)(N-j)\Delta_i\Delta_j\Bigg].
\end{split}
\end{equation}
\smallskip

\noindent\textbf{Sample variance.} Denote the population moments $b_1=\E X_1$ and $\mu_k=\E(X_1-b_1)^k$, for $k=2,\ldots, 6$. Then, for $1\leq k\leq N$,
\begin{equation}\label{V_g_1}
g_1(x_k)=\frac{1}{n}\frac{N}{N-2}\left[(x_k-b_1)^2-\mu_2\right],
\end{equation}
and, for $1\leq k<l\leq N$,
\begin{equation}\label{V_g_2}
g_2(x_k,x_l)=\frac{1}{n(n-1)}\left\{ (x_k-x_l)^2+\frac{2N}{(N-1)(N-2)}\mu_2-\frac{N}{N-2}\left[ (x_k-b_1)^2+(x_l-b_1)^2\right]\right\}.
\end{equation}
After strightforward calculations, we obtain the following formulas:
\begin{equation}\label{V_sigma_1}
\sigma_1^2=\frac{1}{n^2}\left( \frac{N}{N-2}\right)^2 \left( \mu_4-\mu_2^2\right)
\end{equation}
and
\begin{equation}\label{V_sigma_2}
\sigma_2^2=\frac{4}{n^2(n-1)^2}\frac{N}{(N-1)(N-2)}\left( \frac{N^2-3N+3}{N-1}\mu_2^2-\mu_4\right).
\end{equation}
In fact, various expressions of $\Var U_V$ are known in the literature. For a comparison, we mention just that appeared in \cite{IK_1944}.

\subsection{Asymptotic normality}

Common inferences about statistics are based on knowledge of their distributions. If exact distributions cannot be accessed, then, for samples of a sufficiently large size, the normal approximation to distributions is usually appropriate. Here, for the  statistics under investigation, we give sufficient and simple conditions where the distribution function
\begin{equation}\label{F_nS}
F_{nS}(y)=\PP \left\{ U-\E U\leq yS\right\}
\end{equation}
of the Studentized $U$-statistic is asymptotically normal as the sample size increases. Here 
\begin{equation}\label{U_var_jack}
S^2=S^2(\mathbb X)=\left( 1-\frac{n}{N}\right) \frac{n-1}{n}\sum_{i=1}^n \left( U_{n-1}(\mathbb X\backslash X_i)-\widebar U\right)^2, \quad \text{where} \quad \widebar U=\frac{1}{n}\sum_{i=1}^n  U_{n-1}(\mathbb X\backslash X_i),
\end{equation}
is the jackknife estimator of the variance for any $U$-statistic.

In the finite populations asymptotics, the population size increases together with the sample size. We denote $n_*=\min\{n, N-n\}$, which tends to infinity as $n$ does in the \iid{} setup. Next, to be correct in the formulation of asymptotic results, a sequence of values  ${\cal X}_r=\{ x_{r,1},\ldots, x_{r, N_r}\}$ in the populations, with $N_r\to\infty$ as $r\to\infty$, and a sequence of statistics $U_{n_r}(\mathbb X_r)$, where $\mathbb X_r=\{ X_{r,1},\ldots, X_{r, n_r}\}$ is a sample drawn without replacement from  ${\cal X}_r$, should be considered. Further, we omit the subscript $r$ for these and other quantities for notational simplicity.  

Denote $\tau^2=n(1-n/N)$ for short.  \cite{ER_1959}, and \cite{H_1960} Lindeberg-type condition: for every $\varepsilon>0$,
\begin{equation}\label{lind_2}
\sigma_1^{-2} \E g_1^2(X_1) \mathbb{I}\{ \left| g_1(X_1) \right|>\varepsilon\tau\sigma_1\}=o(1) \quad \text{as} \quad n_* \to \infty,
\end{equation}
imposed on the linear part of $U$-statistic, is necessary for the normality of asymptotically linear statistics as the size $n_*$ grows. This condition, together with moments conditions ensuring the asymptotic linearity, is sufficient for the satistics  $U_G$ and $U_V$ by the following limit theorem. 
\begin{thm}\label{t:norm_u}
Assume that $n_* \to \infty$.  Let \eqref{lind_2} be satisfied. Assume that for all  $n_*$: (i) for $U_G$, $\E X_1^2\leq C_1<\infty$ holds; (ii) for $U_V$, $\E X_1^4\leq C_2<\infty$ holds. Then, for $U_G$ and $U_V$, \eqref{F_nS} tends to the standard normal distribution function $\Phi(y)$ for every $y\in\mathbb R$, respectively.
\end{thm}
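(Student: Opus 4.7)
The plan is to reduce the Studentized convergence in \eqref{F_nS} to the unstudentized one via ratio-consistency of $S^2$, and the latter to a classical CLT for the linear leading term of Hoeffding's decomposition \eqref{U_dec}. Writing $U - \E U = U_1 + U_2$, the uncorrelatedness of $U_1$ and $U_2$ gives the split $\Var U = \Var U_1 + \Var U_2$ already encoded in \eqref{U_var_dec}.

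First, I would show $\Var U_2 = o(\Var U_1)$, so the quadratic remainder is asymptotically negligible. From \eqref{U_var_dec}, the ratio $\Var U_2/\Var U_1$ is an explicit combinatorial factor times $\sigma_2^2/\sigma_1^2$. For $U_V$, substituting \eqref{V_sigma_1}--\eqref{V_sigma_2} expresses this ratio as an $O(1/n_*)$ multiple of a bounded function of the central moments $\mu_2$ and $\mu_4$, controlled uniformly by (ii). For $U_G$, the analogous calculation from \eqref{GMD_sigma1}--\eqref{GMD_sigma2}, together with crude inequalities bounding the $\Delta_i$-sums in terms of the population second moment (and hence $\E X_1^2$ by (i)), again yields a ratio of order $O(1/n_*)$. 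Consequently $U_2/\sqrt{\Var U_1}\to 0$ in $L^2$.

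Next, the linear part $U_1 = \sum_{i=1}^n g_1(X_i)$ is a centred sum under SRSWOR, to which the Erd\H{o}s--R\'enyi--H\'ajek CLT applies; its Lindeberg--Feller hypothesis is precisely \eqref{lind_2}, so $U_1/\sqrt{\Var U_1}\Rightarrow N(0,1)$. Combined with the previous step and $\Var U/\Var U_1\to 1$, Slutsky's lemma delivers $(U-\E U)/\sqrt{\Var U}\Rightarrow N(0,1)$. To conclude, I would invoke ratio-consistency of the jackknife, $S^2/\Var U\to 1$ in probability; under (i)--(ii) this follows from the finite-population jackknife theory underlying \cite{B_2003,B_2007}, where $S^2$ is shown to estimate the dominant component $\Var U_1$ consistently, the remainder being absorbed by the already-established bound $\Var U_2 = o(\Var U_1)$. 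A second application of Slutsky then yields $F_{nS}(y)\to\Phi(y)$ pointwise in $y$.

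The main obstacle is the $\Var U_2 = o(\Var U_1)$ step for $U_G$: unlike the sample-variance case, where \eqref{V_sigma_2} is a transparent moment expression, the GMD influence functions involve the weights $a_i$ and spacings $\Delta_i$, so the required bounds must be exhibited carefully and uniformly across the sequence of populations. The jackknife ratio-consistency is essentially a quotation from the cited Bloznelis papers rather than new work.
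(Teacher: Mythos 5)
Your plan is essentially the paper's proof unpacked: the paper likewise rests on the Hoeffding decomposition \eqref{U_dec}, a Lindeberg-type CLT for the linear part under \eqref{lind_2}, negligibility of the quadratic part, and consistency of the jackknife variance \eqref{U_var_jack}, but it packages all of these steps as citations to Theorem 1 and Proposition 3 of Bloznelis and G\"otze (2001) (and, for $U_G$, Theorem 1 of \v{C}iginas (2013b)), verifying only that the linear-part variances of $\sqrt{n}U_G$ and $\sqrt{n}U_V$ are finite and nondegenerate and that the quadratic-part variance of $\sqrt{n}U_V$ vanishes. The only real divergence is that for $U_G$ the paper sidesteps your self-identified ``main obstacle'' (bounding the $\Delta_i$-sums in \eqref{GMD_sigma1}--\eqref{GMD_sigma2} directly) by routing through the $L$-statistic normality result, and both your argument and the paper's tacitly assume the normalized linear part is bounded away from zero, which neither \eqref{lind_2} nor conditions (i)--(ii) supplies by itself.
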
 
\begin{proof}
To be consistent with conditions imposed on symmetric (and thus $U$-) statistics in \cite{BG_2001}, consider normalized versions of the statistics of interest: $\sqrt{n} U_G$ and $\sqrt{n} U_V$. Then the variances of linear parts from the decompositions of these statistics are bounded away from zero, and are finite if the corresponding conditions \emph{(i)} and \emph{(ii)} are satisfied. Therefore, in the case of $U_G$, the normality proof follows immediately from Theorem 1 in \cite{C_2013b} through Proposition 3 in \cite{BG_2001}. In the case of $U_V$, by Theorem 1 and Proposition 3 in \cite{BG_2001}, it suffices to verify that the variance of quadratic part of  $\sqrt{n}U_V$ tends to zero as $n_* \to \infty$. If \emph{(ii)} is satisfied, it follows easily from the explicit formulas above.
\end{proof}

\subsection{True one-term Edgeworth expansions}

When the sample size is not a large, the normal approximation to \eqref{F_nS} can be inaccurate. Then the one-term Edgeworth expansion
\begin{equation}\label{H_nS}
H_{nS}(y)=\Phi(y)+\frac{\left( 1-2n/N+\left(2-n/N\right)y^2\right)\alpha+3\left(y^2+1\right)\kappa}{6\tau}\varphi(y),
\end{equation}
for Studentized $U$-statistics, constructed in \cite{B_2003}, can be an improvement. Here $\varphi(y)$ is the standard normal density function, and  \begin{equation*}\label{alpha_kappa}
\alpha=\sigma_1^{-3}\E g_1^3(X_1) \quad \text{and} \quad \kappa=\sigma_1^{-3}\tau^2\E g_2(X_1,X_2)g_1(X_1)g_1(X_2)
\end{equation*}
 are the population characteristics. Next, we give detailed expressions of these parameters for both statistics of interest.
\smallskip

\noindent\textbf{GMD statistic.} Routine but tedious combinatorial calculations give
\begin{equation}\label{G_alpha}
\begin{split}
\alpha=-\sigma_1^{-3}\frac{8}{n^3}\frac{1}{(N-2)^3} &\Bigg[ \sum_{i=1}^{N-1}i(N-2i)(N-i)a_i^3\Delta_i^3+3\sum_{1\leq i<j\leq N-1}i(N-2i)(N-j)a_i^2a_j\Delta_i^2\Delta_j\\
&\quad +3\sum_{1\leq i<j\leq N-1}i(N-2j)(N-j)a_ia_j^2\Delta_i\Delta_j^2\\
& \qquad +6\sum_{1\leq i<j<m\leq N-1}i(N-2j)(N-m)a_ia_ja_m\Delta_i\Delta_j\Delta_m\Bigg]
\end{split}
\end{equation}
and
\begin{equation}\label{G_kappa}
\kappa=-\sigma_1^{-3}\tau^2\frac{16}{n^3(n-1)}\frac{N}{(N-1)^2(N-2)^3}\sum_{i=1}^{N-1}\sum_{j=1}^{N-1}\sum_{m=1}^{N-1}c_{ijm}a_ja_m\Delta_i\Delta_j\Delta_m,
\end{equation}
where
\begin{equation*}
c_{ijm}=\begin{cases}
i(i-1)(N-m)[N-j-1+N^{-1}j(m-j)], &\text{if $i\leq j\leq m$,}\\
i(i-1)(N-j)[N-m-1+N^{-1}m(m-j)], &\text{if $i\leq m<j$,}\\
j(N-m)[(i-1)(N-i-1)+N^{-1}\{ (N-i)(N-i-1)(i-j)+i(i-1)(m-i)\}], &\text{if $j<i<m$,}\\
m(N-j)[(i-1)(N-i-1)+N^{-1}\{ i(i-1)(i-j)+(N-i-1)(N-i)(m-i)\}], &\text{if $m<i<j$,}\\
j(N-i-1)(N-i)[m-1+N^{-1}(N-m)(m-j)], &\text{if $j<m\leq i$,}\\
m(N-i-1)(N-i)[j-1+N^{-1}(N-j)(m-j)], &\text{if $m\leq j\leq i$.}
\end{cases}
\end{equation*}
These formulas are new in the literature.
\smallskip

\noindent\textbf{Sample variance.} With strightforward calculations one can arrive to the following results:
\begin{equation}\label{V_alpha}
\alpha=\sigma_1^{-3}\frac{1}{n^3}\left( \frac{N}{N-2}\right)^3 \left( 2\mu_2^3-3\mu_4\mu_2+\mu_6\right)
\end{equation}
and
\begin{equation}\label{V_kappa} 
\kappa=\sigma_1^{-3}\tau^2\frac{2}{n^3(n-1)}\left( \frac{N}{N-2}\right)^3\frac{1}{N-1}\left( -(N-2)\mu_3^2-\frac{2N-1}{N-1}\mu_4\mu_2+\frac{N}{N-1}\mu_2^3+\mu_6\right). 
\end{equation}
Note that \eqref{V_kappa} can be simplified (approximated) by leaving the term with $\mu_3^2$ in the brackets only. For comparison, expressions similar to these can be identified in the Edgeworth approximation given by \cite{KW_1990} for standardized sample variance. 
\smallskip

While an error of the normal approximation is typically of the order $O(n_*^{-1/2})$, see, e.g., \cite{ZC_1990} for the case of standardized $U$-statistics, the error of the true (with known parameters $\alpha$ and $\kappa$) one-term Edgeworth approximation \eqref{H_nS} is of the order $o(n_*^{-1/2})$ under certain conditions. The first condition, from those, is the asymptotical nonlatticeness of the linear part of $U$-statistic: for every $\varepsilon>0$ and every $B>0$, 
\begin{equation}\label{non_lat}
\liminf_{n_*\to\infty}\sup_{\varepsilon<|t|<B} \left| \E \exp\left\{ \mathrm{i}t\sigma_1^{-1}g_1(X_1)\right\}\right|<1,
\end{equation}
see \cite{BG_2001}. This and other specific sufficient conditions for the statistics $U_G$ and $U_V$ are summarized in the following theorem. 
\begin{thm}\label{t:edge_u}
Assume that $n_* \to \infty$ and $(1-n/N)\tau \to \infty$. Let \eqref{non_lat} be satisfied. Assume that, for some $\delta>0$ and for all  $n_*$: (i) for $U_G$, $\E |X_1|^{6+\delta}\leq C_1<\infty$ holds; (ii) for $U_V$, $\E |X_1|^{12+\delta}\leq C_2<\infty$ holds. Then, we have 
\begin{equation*}
\sup_{y\in\mathbb R}|F_{nS}(y)-H_{nS}(y)|=o(n_*^{-1/2}) \quad \text{as} \quad n_* \to \infty,
\end{equation*}
 for $U_G$ and $U_V$, respectively.
\end{thm}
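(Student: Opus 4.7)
The plan is to invoke the general one-term Edgeworth expansion theorem for Studentized $U$-statistics drawn from a finite population without replacement, established in \cite{B_2003}; the task then reduces to verifying that theorem's hypotheses separately for the two kernels. Apart from \eqref{non_lat} and the growth condition $(1-n/N)\tau \to \infty$, which are already assumed, those hypotheses amount to: (i) a lower bound ensuring that the linear part of the Hoeffding decomposition \eqref{U_dec} does not degenerate (so that $n^2 \sigma_1^2$ stays bounded below); and (ii) finite moments just above the sixth of the normalized influence function $g_1(X_1)/\sigma_1$, together with a matching moment bound on $g_2(X_1,X_2)/\sigma_1$.

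For the non-degeneracy, I would read it off the explicit expressions \eqref{GMD_sigma1} and \eqref{V_sigma_1}: each is a strictly positive quadratic form in the gaps $\Delta_i$, respectively a positive combination of population moments, bounded away from zero under any nontrivial asymptotic regime. The moment bounds are where the distinction between the two kernels appears. For $U_G$, the formula for $g_1(x_k)$ displayed in Section \ref{s:2} is a linear combination of the $\Delta_i$ with weights bounded by a universal constant in absolute value, so an Abel-summation argument gives the pointwise bound $n|g_1(X_1)| \leq C(|X_1| + N^{-1}\sum_{j=1}^N |x_j|)$, and a similar bound holds for $n(n-1)|g_2(X_1,X_2)|$. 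Hence the assumption $\E|X_1|^{6+\delta} \leq C_1$ propagates at once into the required moment bounds on $g_1$ and $g_2$. For $U_V$, expression \eqref{V_g_1} shows that $n g_1(X_1)$ is essentially $(X_1-b_1)^2-\mu_2$, quadratic in the data, so its $(6+\delta')$-th moment requires the $(12+2\delta')$-th moment of $X_1$; this is supplied, after trivially relabelling $\delta$, by the hypothesis $\E|X_1|^{12+\delta} \leq C_2$, and the kernel $g_2$ in \eqref{V_g_2} is treated identically.

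The main obstacle, and the reason the argument does not simplify further, is the bookkeeping required to match the exact form of Bloznelis's hypotheses, stated for a generic symmetric $U$-statistic with a different normalization convention, to the explicit formulas \eqref{GMD_sigma1}--\eqref{GMD_sigma2} and \eqref{V_sigma_1}--\eqref{V_sigma_2} for $\sigma_1^2$ and $\sigma_2^2$. In particular, the hypotheses of \cite{B_2003} also involve compound conditional moments of $g_2$ given $X_1$, and the Studentization via the jackknife variance \eqref{U_var_jack} requires that $S^2$ approximate $\Var U$ with an error contributing only $o(n_*^{-1/2})$ to the distance to $H_{nS}$. This last point is handled internally by the cited theorem once the kernel moment conditions are in place, so it introduces no additional assumptions beyond (i) and (ii). With these verifications done, the conclusion follows verbatim from the general result of \cite{B_2003}, applied in each of the two cases.
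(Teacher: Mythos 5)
Your proposal follows the same route as the paper: both reduce the claim to Theorem 1 of \cite{B_2003} and then verify boundedness of the normalized moment characteristics $\beta_s=\sigma_1^{-s}\E|g_1(X_1)|^s$ and $\gamma_s=\sigma_1^{-s}\tau^{2s}\E|g_2(X_1,X_2)|^s$ for $s>6$, via a pointwise linear-in-$|x|$ bound on the influence functions for $U_G$ (the paper delegates this to the technique of \cite{C_2012}) and a quadratic-in-$|x|$ bound from \eqref{V_g_1}--\eqref{V_g_2} for $U_V$, which is exactly why the moment orders $6+\delta$ and $12+\delta$ appear. Your explicit remark that $n^2\sigma_1^2$ must stay bounded below is a point the paper leaves implicit (its final bounds carry $(\mu_4-\mu_2^2)^{s/2}$ in the denominator), but otherwise the arguments coincide.
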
 
\begin{proof}
In the case of $U_G$, the proof is the corollary of Theorem 1 in \cite{B_2003} following technique in the proof of Theorem 1 in \cite{C_2012}. In particular, by these theorems, the boundedness of the characteristics $\beta_s=\sigma_1^{-s} \E |g_1(X_1)|^s$ and $\gamma_s=\sigma_1^{-s}\tau^{2s} \E |g_2(X_1,X_2)|^s$, as $n_* \to \infty$, must be verified for $s>6$ only.  

In the case of $U_V$, the task is the same. By \eqref{V_g_1}, for $s\geq 1$, applying inequalities $|a-b|^s\leq 2^{s-1}(a^s+b^s)$ where $a,b\geq 0$, and $\mu_2^s\leq \mu_{2s}$, we get
\begin{equation}\label{est_g1s}
\begin{split}
\E |g_1(X_1)|^s&=\frac{1}{N}\sum_{k=1}^N |g_1(x_k)|^s\leq \frac{2^{s-1}}{n^s}  \left(\frac{N}{N-2} \right)^s  \frac{1}{N}\sum_{k=1}^N \left( (x_k-b_1)^{2s}+\mu_2^s\right) \leq \left(\frac{2N}{n(N-2)} \right)^s \mu_{2s}. 
\end{split}
\end{equation}
By \eqref{V_g_2}, for $1\leq k<l\leq N$, applying $(x_k-x_l)^2\leq 2\left( (x_k-b_1)^2+(x_l-b_1)^2\right)$, we have 
\begin{equation*}
\begin{split}
|g_2(x_k,x_l)|&\leq \frac{1}{n(n-1)} \left(\frac{3N-4}{N-2}\left( (x_k-b_1)^2+(x_l-b_1)^2\right)+\frac{2N}{(N-1)(N-2)}\mu_2 \right) \\
&\leq \frac{3}{n(n-1)} \frac{N}{N-2} \left((x_k-b_1)^2+(x_l-b_1)^2+\mu_2 \right).
\end{split}
\end{equation*}
Then, for $s\geq 1$, similarly as in \eqref{est_g1s}, applying $(a+b)^s\leq 2^{s-1}(a^s+b^s)$ twice, where $a,b\geq 0$, and noting that $\sum_{1\leq k<l\leq N} \left( (x_k-b_1)^{2s}+(x_l-b_1)^{2s}\right)=N(N-1)\mu_{2s}$, we obtain
\begin{equation}\label{est_g2s}
\begin{split}
\E |g_2(X_1,X_2)|^s&={N\choose 2}^{-1}\sum_{1\leq k<l\leq N} |g_2(x_k,x_l)|^s \\
&\leq \frac{3^s}{n^s(n-1)^s} \left(\frac{N}{N-2} \right)^s{N\choose 2}^{-1}\sum_{1\leq k<l\leq N} \left((x_k-b_1)^2+(x_l-b_1)^2+\mu_2 \right)^s \\
&\leq  \frac{3^s2^{s-1}}{n^s(n-1)^s} \left(\frac{N}{N-2} \right)^s{N\choose 2}^{-1}\sum_{1\leq k<l\leq N} \left(2^{s-1}\left((x_k-b_1)^{2s}+(x_l-b_1)^{2s}\right)+\mu_2^s \right) \\
&\leq  \frac{3^s2^{s-1}(2^s+1)}{n^s(n-1)^s} \left(\frac{N}{N-2} \right)^s\mu_{2s}.
\end{split}
\end{equation}
Then we get from \eqref{est_g1s}, \eqref{est_g2s} and \eqref{V_sigma_1} that
\begin{equation*}
\beta_s\leq \frac{2^s\mu_{2s}}{\left( \mu_4-\mu_2^2\right)^{s/2}} \quad \text{and} \quad  \gamma_s\leq 3^s 2^{2s-1}\left( 2^s+1\right)\left( 1-\frac{n}{N}\right)^s \frac{\mu_{2s}}{\left( \mu_4-\mu_2^2\right)^{s/2}}.
\end{equation*}
The proof is completed.
\end{proof}

\section{Estimation of parameters}\label{s:4}

\subsection{Estimators of variances}

The jackknife variance estimator, defined by \eqref{U_var_jack}, is universal for $U$- and other statistics but it is not the best for the particular ones. In \cite{PC_2013}, bootstrap and calibrated estimators, constructed for general $L$-statistics, are comparatively complex. Here, for both statistics of interest, we have explicit expressions of their variances. Therefore, more natural as well as simple estimators of the variances are possible. We give here, in fact, plug-in estimators of the variances, replacing population moments by their empirical counterparts in the parameters $\sigma_1^2$ and $\sigma_2^2$ defining variance \eqref{U_var_dec}.
\smallskip

\noindent\textbf{GMD statistic.} Denote $\Delta_{i:n}=X_{i+1:n}-X_{i:n}$ and $A_i=(2i-n)/n$, for $1\leq i\leq n-1$. Then the estimators of the variance components \eqref{GMD_sigma1} and \eqref{GMD_sigma2} are  
\begin{equation}\label{GMD_sigma1_est}
\hat\sigma_{1G}^2=\frac{4}{n^4}\left(\frac{N}{N-2}\right)^2\Bigg[ \sum_{i=1}^{n-1}i(n-i)A_i^2\Delta_{i:n}^2+2\sum_{1\leq i<j\leq n-1}i(n-j)A_iA_j\Delta_{i:n}\Delta_{j:n}\Bigg]
\end{equation}
and
\begin{equation}\label{GMD_sigma2_est}
\begin{split}
\hat\sigma_{2G}^2=\frac{16}{n^4(n-1)^4}\frac{N}{N-2}\Bigg[& \sum_{i=1}^{n-1}i(i-1)(n-i-1)(n-i)\Delta_{i:n}^2 \\
& \quad +2\sum_{1\leq i<j\leq n-1}i(i-1)(n-j-1)(n-j)\Delta_{i:n}\Delta_{j:n}\Bigg].
\end{split}
\end{equation}
Denote by $\hat\sigma_G^2$ the estimator of the variance of $U_G$ obtained by plugging \eqref{GMD_sigma1_est} and \eqref{GMD_sigma2_est} into \eqref{U_var_dec}.
\smallskip

\noindent\textbf{Sample variance.} Denote the sample moments by $m_k=n^{-1}\sum_{i=1}^n(X_i-n^{-1}\sum_{j=1}^n X_j)^k$,  for $k=2,\ldots, 6$. Replacing the central moments in \eqref{V_sigma_1} and \eqref{V_sigma_2} by the corresponding empirical moments, we get 
\begin{equation}\label{V_sigma_1_est}
\hat\sigma_{1V}^2=\frac{1}{n^2}\left( \frac{N}{N-2}\right)^2 \left( m_4-m_2^2\right)
\end{equation}
and
\begin{equation}\label{V_sigma_2_est}
\hat\sigma_{2V}^2=\frac{4}{n^2(n-1)^2}\frac{N}{(N-1)(N-2)}\left( \frac{N^2-3N+3}{N-1}m_2^2-m_4\right).
\end{equation}
Let $\hat\sigma_V^2$ denote the estimator of the variance of $U_V$ obtained by plugging \eqref{V_sigma_1_est} and \eqref{V_sigma_2_est} into \eqref{U_var_dec}.

\subsection{Estimators for parameters defining Edgeworth expansions}\label{s:param_est}

In order to apply the one-term Edgeworth approximation \eqref{H_nS} to the distribution functions of the statistics, the parameters $\alpha$ and $\kappa$ must be evaluated. Firstly, case (A), analogously to the variance estimation case, we construct estimators of the parameters directly from the explicit expressions available. Secondly, case (B), we assume that  the auxiliary variable $z$ is at our disposal with the known values $\{ z_1,\ldots, z_N\}$ for all population elements. It is expected in this case, that $z$ is well-correlated with the study variable $x$. Then the estimators below are immediately obtained from the true values of the parameters.  
\smallskip

\noindent\textbf{GMD statistic.} Case (A). With the notations used for the variance estimator, by formulas \eqref{G_alpha} and \eqref{G_kappa}, the estimators are
\begin{equation}\label{G_alpha_est}
\begin{split}
\hat\alpha_G=-\hat\sigma_{1G}^{-3}\frac{8}{n^6}\left(\frac{N}{N-2}\right)^3 &\Bigg[ \sum_{i=1}^{n-1}i(n-2i)(n-i)A_i^3\Delta_{i:n}^3+3\sum_{1\leq i<j\leq n-1}i(n-2i)(n-j)A_i^2A_j\Delta_{i:n}^2\Delta_{j:n}\\
& \quad +3\sum_{1\leq i<j\leq n-1}i(n-2j)(n-j)A_iA_j^2\Delta_{i:n}\Delta_{j:n}^2\\
& \qquad +6\sum_{1\leq i<j<m\leq n-1}i(n-2j)(n-m)A_iA_jA_m\Delta_{i:n}\Delta_{j:n}\Delta_{m:n}\Bigg]
\end{split}
\end{equation}
and
\begin{equation}\label{G_kappa_est}
\hat\kappa_G=-\hat\sigma_{1G}^{-3}\tau^2\frac{16}{n^5(n-1)^3}\left(\frac{N}{N-2}\right)^3 \,\sum_{i=1}^{n-1}\sum_{j=1}^{n-1}\sum_{m=1}^{n-1}C_{ijm}A_jA_m\Delta_{i:n}\Delta_{j:n}\Delta_{m:n},
\end{equation}
with the case function
\begin{equation*}
C_{ijm}=\begin{cases}
i(i-1)(n-m)[n-j-1+n^{-1}j(m-j)], &\text{if $i\leq j\leq m$,}\\
i(i-1)(n-j)[n-m-1+n^{-1}m(m-j)], &\text{if $i\leq m<j$,}\\
j(n-m)[(i-1)(n-i-1)+n^{-1}\{ (n-i)(n-i-1)(i-j)+i(i-1)(m-i)\}], &\text{if $j<i<m$,}\\
m(n-j)[(i-1)(n-i-1)+n^{-1}\{ i(i-1)(i-j)+(n-i-1)(n-i)(m-i)\}], &\text{if $m<i<j$,}\\
j(n-i-1)(n-i)[m-1+n^{-1}(n-m)(m-j)], &\text{if $j<m\leq i$,}\\
m(n-i-1)(n-i)[j-1+n^{-1}(n-j)(m-j)], &\text{if $m\leq j\leq i$.}
\end{cases}
\end{equation*}
Case (B). Having the additional information, the ordered sequence of the values $z_1,\ldots, z_N$ is used instead of $x_1\leq\cdots\leq x_N$ in the expressions \eqref{G_alpha} and \eqref{G_kappa} of the true parameters $\alpha$ and $\kappa$. Denote the resulting estimates by ${}_z\hat\alpha_G$ and ${}_z\hat\kappa_G$.  
\smallskip

\noindent\textbf{Sample variance.} Case (A). From population parameters \eqref{V_alpha} and \eqref{V_kappa}, we have the following plug-in estimators:
\begin{equation}\label{V_alpha_est}
\hat\alpha_V=\hat\sigma_{1V}^{-3}\frac{1}{n^3}\left( \frac{N}{N-2}\right)^3 \left( 2m_2^3-3m_4m_2+m_6\right)
\end{equation}
and
\begin{equation}\label{V_kappa_est} 
\hat\kappa_V=\hat\sigma_{1V}^{-3}\tau^2\frac{2}{n^3(n-1)}\left( \frac{N}{N-2}\right)^3\frac{1}{N-1}\left( -(N-2)m_3^2-\frac{2N-1}{N-1}m_4m_2+\frac{N}{N-1}m_2^3+m_6\right). 
\end{equation}
Case (B). In \eqref{V_alpha} and \eqref{V_kappa}, the central population moments $\mu_k$ are evaluated using the values $z_1,\ldots, z_N$. Then denote the new estimates by ${}_z\hat\alpha_V$ and ${}_z\hat\kappa_V$.

\section{Empirical Edgeworth and bootstrap approximations}\label{s:5}

Replacing the population parameters $\alpha$ and $\kappa$ in Edgeworth expansion \eqref{H_nS} by their estimators, we obtain the so-called empirical Edgeworth expansion. If the particular estimators of the parameters are asymptotically consistent, then, under the conditions of Theorem \ref{t:edge_u}, the empirical Edgeworth expansion approximates distribution function \eqref{F_nS} with an error of the same order but in probability. In \cite{B_2001}, consistent jackknife estimators of the parameters were constructed. Bootstrap and calibrated estimators of the parameters were considered in \cite{C_2013a}, and \cite{PC_2013}, respectively. Here, for each of the statistics $U_G$ and $U_V$, we have two new versions of the empirical Edgeworth expansion.
\smallskip

\noindent\textbf{GMD statistic.} By the results in Section \ref{s:param_est}, we have the empirical Edgeworth expansion
\begin{equation}\label{H_nSG}
\widehat H_{nSG}(y)=\Phi(y)+\frac{\left( 1-2n/N+\left(2-n/N\right)y^2\right)\hat\alpha_G+3\left(y^2+1\right)\hat\kappa_G}{6\tau}\varphi(y),
\end{equation}
and, in the case where the auxiliary information is available, the approximation is
\begin{equation}\label{zH_nSG}
{}_z\widehat H_{nSG}(y)=\Phi(y)+\frac{\left( 1-2n/N+\left(2-n/N\right)y^2\right){}_z\hat\alpha_G+3\left(y^2+1\right){}_z\hat\kappa_G}{6\tau}\varphi(y),
\end{equation} 
which is not a random function because the values of the variable $z$ are treated as fixed in the population.  
\smallskip

\noindent\textbf{Sample variance.} The corresponding approximations to the distribution function of the Studentized sample variance are
\begin{equation}\label{H_nSV}
\widehat H_{nSV}(y)=\Phi(y)+\frac{\left( 1-2n/N+\left(2-n/N\right)y^2\right)\hat\alpha_V+3\left(y^2+1\right)\hat\kappa_V}{6\tau}\varphi(y),
\end{equation}
and
\begin{equation}\label{zH_nSV}
{}_z\widehat H_{nSV}(y)=\Phi(y)+\frac{\left( 1-2n/N+\left(2-n/N\right)y^2\right){}_z\hat\alpha_V+3\left(y^2+1\right){}_z\hat\kappa_V}{6\tau}\varphi(y),
\end{equation} 
where the later does not depend on the sample.
\smallskip

Estimators of the parameters $\alpha$ and $\kappa$ in expansion \eqref{H_nSV} are asymptotically consistent under conditions of Theorem \ref{t:edge_u}. Efficiency of the other empirical Edgeworth expansions is examined in the simulation study in Section \ref{s:6}.

It is known that, in general, non-parametric bootstrap approximations to distributions of statistics are usually of a similar accuracy as one-term Edgeworth expansions. We consider here the finite-population bootstrap scheme introduced in \cite{BBH_1994}. We apply the results of \cite{B_2007} where the accuracy of this bootstrap method is considered for $U$-statistics. 

The bootstrap approximation to distribution \eqref{F_nS} is constructed as follows. Write $N=kn+l$, where $0\leq l<n$. Then, given the sample $\mathbb X$, the empirical population $\widetilde{\cal X}$ of size $N$ is formed by taking $k$ copies of $\mathbb X$ and, if $l>0$, adding the remaining $l$ values which are the simple random sample ${\mathbb Y}=\{Y_1, \ldots, Y_l \}$ drawn without replacement from the set $\mathbb X$. With this particular bootstrap population $\widetilde{\cal X}$, one can turn already to an estimator of \eqref{F_nS}, despite that it is only the one of ${n\choose l}$ empirical populations. Next, we draw the simple random sample $\widetilde{\mathbb X}=\{\widetilde X_1,\ldots, \widetilde X_n \}$ without replacement from $\widetilde{\cal X}$. Denote by $\widetilde U=U_n(\widetilde{\mathbb X})$ the bootstrap estimator for the statistic of interest, and introduce the corresponding jackknife estimator $\widetilde S^2=S^2(\widetilde{\mathbb X})$ of the variance of $\widetilde U$ under given population $\widetilde{\cal X}$. Then the bootstrap approximation to \eqref{F_nS} is
\begin{equation}\label{BF_nS}
\widetilde F_{nS}(y)=\PP \{ \widetilde U-\E (\widetilde U \,\, | \,\, \mathbb X, \mathbb Y)\leq y\widetilde S \,\, | \,\, \mathbb X \},
\end{equation}
which averages over all possible empirical populations. The following theorem is on the validity of this approximation for the statistics $U_G$ and $U_V$.
\begin{thm}\label{t:boot_u}
Assume that the conditions of Theorem \ref{t:edge_u} are satisfied. Then, we have 
\begin{equation*}
\sup_{y\in\mathbb R}|F_{nS}(y)-\widetilde F_{nS}(y)|=o_P(n_*^{-1/2}) \quad \text{as} \quad n_* \to \infty,
\end{equation*}
for $U_G$ and $U_V$, respectively.
\end{thm}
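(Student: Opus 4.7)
\medskip

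\noindent\textbf{Proof plan for Theorem \ref{t:boot_u}.} The plan is to reduce the claim to the main bootstrap-accuracy theorem for Studentized $U$-statistics under the without-replacement bootstrap of \cite{BBH_1994}, as established in \cite{B_2007}. That general result states, roughly, that
\begin{equation*}
\sup_{y\in\mathbb R} \bigl| \widetilde F_{nS}(y) - H_{nS}(y) \bigr| = o_P(n_*^{-1/2})
\quad \text{as} \quad n_*\to\infty,
\end{equation*}
provided that the same moment, nonlatticeness, and design conditions (on the Hoeffding components $g_1, g_2$) which underlie the true Edgeworth expansion \eqref{H_nS} hold. Combining this with Theorem \ref{t:edge_u}, which yields $\sup_y |F_{nS}(y) - H_{nS}(y)| = o(n_*^{-1/2})$, and applying the triangle inequality, will produce the desired rate.

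The concrete steps I would carry out are as follows. First, I would recall from \cite{B_2007} the precise set of hypotheses required for the validity of the bootstrap Edgeworth approximation for Studentized $U$-statistics of degree two under the BBH scheme: the asymptotic condition $(1-n/N)\tau\to\infty$; the nonlatticeness condition \eqref{non_lat} on the linear influence function $g_1$; and uniform boundedness, for some $s>6$, of the characteristics
\begin{equation*}
\beta_s = \sigma_1^{-s}\,\E |g_1(X_1)|^s, \qquad
\gamma_s = \sigma_1^{-s}\tau^{2s}\,\E |g_2(X_1,X_2)|^s.
\end{equation*}
Second, I would observe that these are exactly the hypotheses already verified in the proof of Theorem \ref{t:edge_u}. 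For $U_G$, the moment bounds on $\beta_s,\gamma_s$ for $s>6$ under $\E|X_1|^{6+\delta}\le C_1$ were obtained via the argument of \cite{C_2013b} and Theorem 1 in \cite{C_2012}. For $U_V$, the explicit estimates \eqref{est_g1s} and \eqref{est_g2s} together with \eqref{V_sigma_1} give, for $s>6$,
\begin{equation*}
\beta_s \;\le\; \frac{2^s \mu_{2s}}{(\mu_4-\mu_2^2)^{s/2}},
\qquad
\gamma_s \;\le\; 3^s 2^{2s-1}(2^s+1)\Bigl(1-\frac{n}{N}\Bigr)^s \frac{\mu_{2s}}{(\mu_4-\mu_2^2)^{s/2}},
\end{equation*}
whose uniform boundedness is guaranteed by $\E|X_1|^{12+\delta}\le C_2$ (since $2s>12$ for $s$ slightly above $6$). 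Thus no additional work beyond Theorem \ref{t:edge_u} is needed to secure the hypotheses of \cite{B_2007}.

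Third, I would invoke \cite{B_2007} to conclude $\sup_y|\widetilde F_{nS}(y)-H_{nS}(y)|=o_P(n_*^{-1/2})$, then combine this with the $o(n_*^{-1/2})$ bound from Theorem \ref{t:edge_u} via the triangle inequality to finish the proof for both $U_G$ and $U_V$.

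The main obstacle I expect is bookkeeping rather than a genuine difficulty: one must check that the conditions of \cite{B_2007} are stated in a form compatible with the parametrisation (the factor $\tau^{2s}$ in $\gamma_s$, the normalisation of $h$ by $\binom{n}{2}^{-1}$, the exact BBH resampling with the ``remainder'' sample $\mathbb Y$ of size $l$), and that nonlatticeness of $g_1$ transfers to the bootstrap world in the sense required by \cite{B_2007}---which for the BBH scheme is a routine consequence of the same condition \eqref{non_lat} applied to the empirical population, since with high probability the empirical distribution inherits a nondegenerate lattice-free behaviour from the parent. Once this identification of hypotheses is settled, the proof is immediate.
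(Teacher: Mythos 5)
Your proposal is correct and follows essentially the same route as the paper: both reduce the claim to the main bootstrap-accuracy theorem of \cite{B_2007} for the without-replacement bootstrap and verify its hypotheses from the moment assumptions already imposed in Theorem \ref{t:edge_u}. The paper's verification is even shorter than yours --- it checks only condition (8) of \cite{B_2007}, a direct moment bound on the kernel (boundedness of $\E(X_1-X_2)^6$ for $U_G$ and $\E(X_1-X_2)^{12}$ for $U_V$), rather than re-deriving the $\beta_s,\gamma_s$ bounds and passing through $H_{nS}$ via the triangle inequality.
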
 
\begin{proof}
It follows from condition (8) in \cite{B_2007}, that it suffices to verify that, for the statistics $U_G$ and $U_V$, the moments $\E (X_1-X_2)^6$ and $\E (X_1-X_2)^{12}$ are bounded for all $n_*$, respectively. By the conditions of theorem, this requirement holds. 
\end{proof}

Denote by $\widetilde F_{nSG}(y)$ and $\widetilde F_{nSV}(y)$ the bootstrap approximations for the statistics $U_G$ and $U_V$, respectively.

\section{Numerical modeling}\label{s:6}

In this section, we illustrate the theoretical results on the second-order approximations to distribution functions of the Studentized GMD statistic and the Studentized sample variance by numerical examples, according to the data framework in Section \ref{s:simul_out}. Thus we consider also how outliers affect these approximations.

For the statistics $U_G$ and $U_V$, denote their `exact' distribution functions by $F_{nSG}(y)$ and $F_{nSV}(y)$, respectively. In the simulation experiments, these functions were evaluated by the Monte--Carlo method, drawing independently $10^6$ samples without replacement from the population and using all values ${\cal X}$, as well as their bootstrap approximations based on the one (because of $N=kn$) empirical population $\widetilde{\cal X}$ constructed from the particular sample $\mathbb X$. Denote true Edgeworth approximations \eqref{H_nS} of the statistics by $H_{nSG}(y)$ and $H_{nSV}(y)$, respectively. To measure an efficiency of the empirical Edgeworth approximations $\widehat H_{nSG}(y)$ and $\widehat H_{nSV}(y)$, and the bootstrap approximations $\widetilde F_{nSG}(y)$ and $\widetilde F_{nSV}(y)$, $10^3$ samples without replacement were drawn independently from the population. 

More specifically, in the tables below, the `exact' distribution functions of the statistics, their normal approximation, the true one-term Edgeworth expansions, the corresponding estimated Edgeworth approximations of two types, and the bootstrap approximations are represented by the several commonly used $q$-quantiles, $q=0.01, 0.05, 0.10, 0.90, 0.95, 0.99$. For the approximations, with the quantiles dependent on the sample, we give two characteristics of the efficiency: the empirical expectations $\hE(\cdot)$ and standard errors $\hS(\cdot)$ from the realizations of these quantiles.   

Tables \ref{t3}--\ref{t6} present results of the approximations, where there are no outliers (the case of $p/N=0$) in the same underlying populations generated from the normal and gamma distribution in Section \ref{s:simul_out}. The correlation is $\rho_{zx}=0.7$.

\begin{table}[!h]
\caption{Approximations to $F_{nSG}(y)$ under ${\cal N}(0, 1)$ with $0\%$ outliers from ${\cal N}(0, 9)$, and $\rho_{zx}=0.7$.}\label{t3}
 \centering
 \tabcolsep=14pt
 \vspace{1mm}
\scalebox{0.89}{
\begin{tabular}{rrrrrrr}
\boldhline
$q=$  &  $0.01$  &  $0.05$  &  $0.10$  &  $0.90$  &  $0.95$  &  $0.99$  \\          
\hline
$F_{nSG}^{-1}(q)\approx$                        &  $-2.592$  &  $-1.779$  &  $-1.363$  &  $1.223$  &  $1.546$  &  $2.157$ \\
$\Phi^{-1}(q)\approx$                                &  $-2.326$  &  $-1.645$  &  $-1.282$  &  $1.282$  &  $1.645$  &  $2.326$ \\
$H_{nSG}^{-1}(q)\approx$                        &  $-2.528$  &  $-1.762$  &  $-1.357$  &  $1.214$  &  $1.536$  &  $2.096$ \\
${}_z\widehat H_{nSG}^{-1}(q)\approx$  &  $-2.513$  &  $-1.752$  &  $-1.350$  &  $1.220$  &  $1.545$  &  $2.116$ \\
$\hE\widehat H_{nSG}^{-1}(q)\approx$    &  $-2.519$  &  $-1.756$  &  $-1.353$  &  $1.217$  &  $1.541$  &  $2.107$ \\
$\hS\widehat H_{nSG}^{-1}(q)\approx$    &  $0.034$  &  $0.023$  &  $0.016$  &  $0.013$  &  $0.021$  &  $0.044$ \\
$\hE\widetilde F_{nSG}^{-1}(q)\approx$   &  $-2.600$  &  $-1.776$  &  $-1.360$  &  $1.222$  &  $1.555$  &  $2.167$ \\
$\hS\widetilde F_{nSG}^{-1}(q)\approx$   &  $0.075$  &  $0.038$  &  $0.027$  &  $\phantom{-}0.020$  &  $\phantom{-}0.026$  &  $\phantom{-}0.044$ \\
\boldhline
\end{tabular} 
}

\vspace{0.5mm}

\caption{Approximations to $F_{nSV}(y)$ under ${\cal N}(0, 1)$ with $0\%$ outliers from ${\cal N}(0, 9)$, and $\rho_{zx}=0.7$.}\label{t4}
 \centering
 \tabcolsep=14pt
 \vspace{1mm}
\scalebox{0.89}{
\begin{tabular}{rrrrrrr}
\boldhline
$q=$  &  $0.01$  &  $0.05$  &  $0.10$  &  $0.90$  &  $0.95$  &  $0.99$  \\          
\hline
$F_{nSV}^{-1}(q)\approx$                         &  $-2.918$  &  $-1.962$  &  $-1.477$  &  $1.160$  &  $1.461$  &  $2.008$ \\
$\Phi^{-1}(q)\approx$                                &  $-2.326$  &  $-1.645$  &  $-1.282$  &  $1.282$  &  $1.645$  &  $2.326$ \\
$H_{nSV}^{-1}(q)\approx$                        &  $-2.680$   &  $-1.882$  &  $-1.447$  &  $1.145$  &  $1.432$  &  $1.878$ \\
${}_z\widehat H_{nSV}^{-1}(q)\approx$  &  $-2.658$  &  $-1.864$  &  $-1.433$  &  $1.155$  &  $1.447$  &  $1.910$ \\
$\hE\widehat H_{nSV}^{-1}(q)\approx$    &  $-2.653$  &  $-1.861$  &  $-1.431$  &  $1.157$  &  $1.450$  &  $1.917$ \\
$\hS\widehat H_{nSV}^{-1}(q)\approx$    &  $0.046$  &  $0.038$  &  $0.029$  &  $0.020$  &  $0.032$  &  $0.068$ \\
$\hE\widetilde F_{nSV}^{-1}(q)\approx$   &  $-2.914$  &  $-1.932$  &  $-1.460$  &  $1.166$  &  $1.473$  &  $2.027$ \\
$\hS\widetilde F_{nSV}^{-1}(q)\approx$  &  $0.147$  &  $0.074$  &  $0.046$  &  $\phantom{-}0.023$  &  $\phantom{-}0.031$  &  $\phantom{-}0.050$ \\
\boldhline
\end{tabular} 
}

\vspace{0.5mm}

\caption{Approximations to $F_{nSG}(y)$ under ${\cal G}(3, 1/\sqrt{3})$ with $0\%$ outliers from ${\cal G}(3, \sqrt{3})$, and $\rho_{zx}=0.7$.}\label{t5}
 \centering
 \tabcolsep=14pt
 \vspace{1mm}
\scalebox{0.89}{
\begin{tabular}{rrrrrrr}
\boldhline
$q=$  &  $0.01$  &  $0.05$  &  $0.10$  &  $0.90$  &  $0.95$  &  $0.99$  \\          
\hline
$F_{nSG}^{-1}(q)\approx$                        &  $-2.888$  &  $-1.903$  &  $-1.443$  &  $1.188$  &  $1.503$  &  $2.062$ \\
$\Phi^{-1}(q)\approx$                                &  $-2.326$  &  $-1.645$  &  $-1.282$  &  $1.282$  &  $1.645$  &  $2.326$ \\
$H_{nSG}^{-1}(q)\approx$                        &  $-2.638$  &  $-1.843$  &  $-1.413$  &  $1.172$  &  $1.468$  &  $1.946$ \\
${}_z\widehat H_{nSG}^{-1}(q)\approx$  &  $-2.572$  &  $-1.793$  &  $-1.378$  &  $1.198$  &  $1.510$  &  $2.038$ \\
$\hE\widehat H_{nSG}^{-1}(q)\approx$    &  $-2.624$  &  $-1.833$  &  $-1.407$  &  $1.177$  &  $1.476$  &  $1.965$ \\
$\hS\widehat H_{nSG}^{-1}(q)\approx$    &  $0.055$   &  $0.045$   &  $0.033$  &  $0.024$  &  $0.037$  &  $0.079$ \\
$\hE\widetilde F_{nSG}^{-1}(q)\approx$   &  $-2.864$  &  $-1.899$  &  $-1.436$  &  $1.190$  &  $1.506$  &  $2.079$ \\
$\hS\widetilde F_{nSG}^{-1}(q)\approx$   &  $0.156$   &  $0.077$   &  $0.048$  &  $\phantom{-}0.025$  &  $\phantom{-}0.035$  &  $\phantom{-}0.060$ \\
\boldhline
\end{tabular} 
}

\vspace{0.5mm}

\caption{Approximations to $F_{nSV}(y)$ under ${\cal G}(3, 1/\sqrt{3})$ with $0\%$ outliers from ${\cal G}(3, \sqrt{3})$, and $\rho_{zx}=0.7$.}\label{t6}
 \centering
 \tabcolsep=14pt
 \vspace{1mm}
\scalebox{0.89}{
\begin{tabular}{rrrrrrr}
\boldhline
$q=$  &  $0.01$  &  $0.05$  &  $0.10$  &  $0.90$  &  $0.95$  &  $0.99$  \\          
\hline
$F_{nSV}^{-1}(q)\approx$                         &  $-3.744$  &  $-2.310$  &  $-1.699$  &  $1.109$  &  $1.391$  &  $1.876$ \\
$\Phi^{-1}(q)\approx$                                &  $-2.326$  &  $-1.645$  &  $-1.282$  &  $1.282$  &  $1.645$  &  $2.326$ \\
$H_{nSV}^{-1}(q)\approx$                        &  $-2.829$   &  $-2.025$  &  $-1.561$  &  $1.077$  &  $1.324$  &  $1.656$ \\
${}_z\widehat H_{nSV}^{-1}(q)\approx$  &  $-2.796$  &  $-1.991$  &  $-1.533$  &  $1.091$  &  $1.347$  &  $1.704$ \\
$\hE\widehat H_{nSV}^{-1}(q)\approx$    &  $-2.788$  &  $-1.985$  &  $-1.529$  &  $1.097$  &  $1.355$  &  $1.719$ \\
$\hS\widehat H_{nSV}^{-1}(q)\approx$    &  $0.075$  &  $0.077$  &  $0.067$  &  $0.040$  &  $0.060$  &  $0.114$ \\
$\hE\widetilde F_{nSV}^{-1}(q)\approx$   &  $-3.642$  &  $-2.267$  &  $-1.655$  &  $1.120$  &  $1.406$  &  $1.909$ \\
$\hS\widetilde F_{nSV}^{-1}(q)\approx$  &  $0.487$  &  $0.277$  &  $0.166$  &  $\phantom{-}0.033$  &  $\phantom{-}0.047$  &  $\phantom{-}0.082$ \\
\boldhline
\end{tabular} 
}
\end{table}

By Table \ref{t3}, the true Edgeworth approximation $H_{nSG}(y)$ improves substantially the normal approximation to $F_{nSG}(y)$. With the help of the auxiliary information, $H_{nSG}(y)$ is estimated well by ${}_z\widehat H_{nSG}(y)$. The bias of this estimate is small in comparison to a possible error of the estimator $\widehat H_{nSG}(y)$. But the later improves the normal approximation to the distribution of $U_G$ too. Differently from all other, the bootstrap approximation $\widetilde F_{nSG}(y)$ is almost unbiased, but its empirical quantiles have larger standard errors compared to the empirical Edgeworth approximation. In Table \ref{t4}, tendencies of the approximations to the distribution function of $U_V$ are the same. In Tables \ref{t5}--\ref{t6}, for the population from the gamma distribution, the results are analogous to those in Tables \ref{t3}--\ref{t4}, but all the corresponding approximations are less accurate. This is because of an asymmetry of the gamma distribution.

\begin{table}[!h]
\caption{Approximations to $F_{nSG}(y)$ under ${\cal N}(0, 1)$ with $6\%$ outliers from ${\cal N}(0, 9)$, and $\rho_{zx}=0.7$.}\label{t7}
 \centering
 \tabcolsep=14pt
 \vspace{1mm}
\scalebox{0.89}{
\begin{tabular}{rrrrrrr}
\boldhline
$q=$  &  $0.01$  &  $0.05$  &  $0.10$  &  $0.90$  &  $0.95$  &  $0.99$  \\          
\hline
$F_{nSG}^{-1}(q)\approx$                        &  $-3.133$  &  $-2.061$  &  $-1.553$  &  $1.143$  &  $1.434$  &  $1.953$ \\
$\Phi^{-1}(q)\approx$                                &  $-2.326$  &  $-1.645$  &  $-1.282$  &  $1.282$  &  $1.645$  &  $2.326$ \\
$H_{nSG}^{-1}(q)\approx$                        &  $-2.745$  &  $-1.940$  &  $-1.490$  &  $1.118$  &  $1.388$  &  $1.783$ \\
${}_z\widehat H_{nSG}^{-1}(q)\approx$  &  $-2.602$  &  $-1.816$  &  $-1.395$  &  $1.184$  &  $1.489$  &  $1.996$ \\
$\hE\widehat H_{nSG}^{-1}(q)\approx$    &  $-2.713$  &  $-1.913$  &  $-1.470$  &  $1.132$  &  $1.410$  &  $1.831$ \\
$\hS\widehat H_{nSG}^{-1}(q)\approx$    &  $0.069$  &  $0.063$  &  $0.050$  &  $0.033$  &  $0.051$  &  $0.103$ \\
$\hE\widetilde F_{nSG}^{-1}(q)\approx$   &  $-3.124$  &  $-2.039$  &  $-1.524$  &  $1.152$  &  $1.449$  &  $1.982$ \\
$\hS\widetilde F_{nSG}^{-1}(q)\approx$   &  $0.267$  &  $0.139$  &  $0.087$  &  $\phantom{-}0.030$  &  $\phantom{-}0.043$  &  $\phantom{-}0.076$ \\
\boldhline
\end{tabular} 
}

\vspace{0.5mm}

\caption{Approximations to $F_{nSV}(y)$ under ${\cal N}(0, 1)$ with $6\%$ outliers from ${\cal N}(0, 9)$, and $\rho_{zx}=0.7$.}\label{t8}
 \centering
 \tabcolsep=14pt
 \vspace{1mm}
\scalebox{0.89}{
\begin{tabular}{rrrrrrr}
\boldhline
$q=$  &  $0.01$  &  $0.05$  &  $0.10$  &  $0.90$  &  $0.95$  &  $0.99$  \\          
\hline
$F_{nSV}^{-1}(q)\approx$                         &  $-4.724$  &  $-2.924$  &  $-2.100$  &  $1.037$  &  $1.280$  &  $1.699$ \\
$\Phi^{-1}(q)\approx$                                &  $-2.326$  &  $-1.645$  &  $-1.282$  &  $1.282$  &  $1.645$  &  $2.326$ \\
$H_{nSV}^{-1}(q)\approx$                        &  $-2.985$   &  $-2.207$  &  $-1.740$  &  $0.979$  &  $1.181$  &  $1.415$ \\
${}_z\widehat H_{nSV}^{-1}(q)\approx$  &  $-2.861$  &  $-2.062$  &  $-1.596$  &  $1.054$  &  $1.291$  &  $1.602$ \\
$\hE\widehat H_{nSV}^{-1}(q)\approx$    &  $-2.889$  &  $-2.097$  &  $-1.634$  &  $1.036$  &  $1.266$  &  $1.560$ \\
$\hS\widehat H_{nSV}^{-1}(q)\approx$    &  $0.086$  &  $0.098$  &  $0.095$  &  $0.050$  &  $0.075$  &  $0.130$ \\
$\hE\widetilde F_{nSV}^{-1}(q)\approx$   &  $-4.600$  &  $-2.792$  &  $-1.974$  &  $1.069$  &  $1.333$  &  $1.787$ \\
$\hS\widetilde F_{nSV}^{-1}(q)\approx$  &  $1.064$  &  $0.663$  &  $0.430$  &  $\phantom{-}0.042$  &  $\phantom{-}0.057$  &  $\phantom{-}0.095$ \\
\boldhline
\end{tabular} 
}

\vspace{0.5mm}

\caption{Approximations to $F_{nSG}(y)$ under ${\cal G}(3, 1/\sqrt{3})$ with $6\%$ outliers from ${\cal G}(3, \sqrt{3})$, and $\rho_{zx}=0.7$.}\label{t9}
 \centering
 \tabcolsep=14pt
 \vspace{1mm}
\scalebox{0.89}{
\begin{tabular}{rrrrrrr}
\boldhline
$q=$  &  $0.01$  &  $0.05$  &  $0.10$  &  $0.90$  &  $0.95$  &  $0.99$  \\          
\hline
$F_{nSG}^{-1}(q)\approx$                        &  $-3.224$  &  $-2.068$  &  $-1.546$  &  $1.148$  &  $1.445$  &  $1.966$ \\
$\Phi^{-1}(q)\approx$                                &  $-2.326$  &  $-1.645$  &  $-1.282$  &  $1.282$  &  $1.645$  &  $2.326$ \\
$H_{nSG}^{-1}(q)\approx$                        &  $-2.740$  &  $-1.933$  &  $-1.483$  &  $1.124$  &  $1.395$  &  $1.795$ \\
${}_z\widehat H_{nSG}^{-1}(q)\approx$  &  $-2.601$  &  $-1.815$  &  $-1.394$  &  $1.186$  &  $1.491$  &  $1.997$ \\
$\hE\widehat H_{nSG}^{-1}(q)\approx$    &  $-2.717$  &  $-1.915$  &  $-1.470$  &  $1.134$  &  $1.410$  &  $1.827$ \\
$\hS\widehat H_{nSG}^{-1}(q)\approx$    &  $0.064$   &  $0.060$   &  $0.049$  &  $0.032$  &  $0.048$  &  $0.097$ \\
$\hE\widetilde F_{nSG}^{-1}(q)\approx$   &  $-3.213$  &  $-2.057$  &  $-1.531$  &  $1.154$  &  $1.453$  &  $1.990$ \\
$\hS\widetilde F_{nSG}^{-1}(q)\approx$   &  $0.268$   &  $0.133$   &  $0.083$  &  $\phantom{-}0.029$  &  $\phantom{-}0.043$  &  $\phantom{-}0.075$ \\
\boldhline
\end{tabular} 
}

\vspace{0.5mm}

\caption{Approximations to $F_{nSV}(y)$ under ${\cal G}(3, 1/\sqrt{3})$ with $6\%$ outliers from ${\cal G}(3, \sqrt{3})$, and $\rho_{zx}=0.7$.}\label{t10}
 \centering
 \tabcolsep=14pt
 \vspace{1mm}
\scalebox{0.89}{
\begin{tabular}{rrrrrrr}
\boldhline
$q=$  &  $0.01$  &  $0.05$  &  $0.10$  &  $0.90$  &  $0.95$  &  $0.99$  \\          
\hline
$F_{nSV}^{-1}(q)\approx$                         &  $-4.895$  &  $-2.890$  &  $-2.042$  &  $1.045$  &  $1.296$  &  $1.725$ \\
$\Phi^{-1}(q)\approx$                                &  $-2.326$  &  $-1.645$  &  $-1.282$  &  $1.282$  &  $1.645$  &  $2.326$ \\
$H_{nSV}^{-1}(q)\approx$                        &  $-2.978$   &  $-2.198$  &  $-1.728$  &  $0.988$  &  $1.193$  &  $1.430$ \\
${}_z\widehat H_{nSV}^{-1}(q)\approx$  &  $-2.864$  &  $-2.064$  &  $-1.597$  &  $1.055$  &  $1.292$  &  $1.600$ \\
$\hE\widehat H_{nSV}^{-1}(q)\approx$    &  $-2.882$  &  $-2.087$  &  $-1.622$  &  $1.045$  &  $1.277$  &  $1.577$ \\
$\hS\widehat H_{nSV}^{-1}(q)\approx$    &  $0.083$  &  $0.095$  &  $0.092$  &  $0.050$  &  $0.073$  &  $0.127$ \\
$\hE\widetilde F_{nSV}^{-1}(q)\approx$   &  $-4.782$  &  $-2.769$  &  $-1.944$  &  $1.079$  &  $1.347$  &  $1.809$ \\
$\hS\widetilde F_{nSV}^{-1}(q)\approx$  &  $1.184$  &  $0.651$  &  $0.416$  &  $\phantom{-}0.041$  &  $\phantom{-}0.058$  &  $\phantom{-}0.097$ \\
\boldhline
\end{tabular} 
}
\end{table}

Let us take the populations of Section \ref{s:simul_out} with $p/N=0.06$. In this case of outliers, the corresponding to Tables \ref{t3}--\ref{t6} results are given in Tables \ref{t7}--\ref{t10}. A behaviour of the approximations to the distributions is very similar to that in the case of no outliers, but erorrs of the approximations are larger now. One can observe also that the estimates of the true Edgeworth expansions, which use the auxiliary information, are much more biased. It holds for the alternative empirical Edgeworth approximations too but in the case of the statistic $U_V$ only (Tables \ref{t8} and \ref{t10}). A sensitivity to the outliers is the smallest comparing Table \ref{t9} with Table \ref{t5}.

\section{Summary}\label{s:7}

The specific estimation strategies for scales are considered under simple random samples without replacement. In a sense, they are consistent with the scale estimation by the sample variance. In particular, the proposed strategies ($S_1$) and ($S_2$) combine the use of the GMD statistic and its bias correction. This combination allows an improvement of the scale estimation in populations where the part of outliers is not large. As the numerical modeling indicates too, under ideal for the sample variance conditions (when there are no outliers), the efficiency of the strategies is not worse. It is important robustness property. 

The new estimators of the parameters and also empirical Edgeworth expansions for the GMD statistic and the sample variance are proposed using the detailed decompositions of the statistics. In general, well-correlated auxiliary information leads to effective inferences about the statistics of interest.


\small

\end{document}